\definecolor{light-gray}{gray}{0.95}
\renewcommand\theequation{\thesection.\@arabic\c@equation}
\newcommand{\RR}{\mathbb{R}}
\chardef\csname pre amssym.def
\def\undefine#1{\let#1\undefined}
\def\newsymbol#1#2#3#4#5{\let\next@\relax
 \ifnum#2=\@ne\let\next@\msafam@\else
 \ifnum#2=\tw@\let\next@\msbfam@\fi\fi
 \mathchardef#1="#3\next@#4#5}
\def\mathhexbox@#1#2#3{\relax
 \ifmmode\mathpalette{}{\m@th\mathchar"#1#2#3}%
 \else\leavevmode\hbox{$\m@th\mathchar"#1#2#3$}\fi}
\def\hexnumber@#1{\ifcase#1 0\or 1\or 2\or 3\or 4\or 5\or 6\or 7\or 8\or
 9\or A\or B\or C\or D\or E\or F\fi}
\font\teneufm=eufm10 \font\seveneufm=eufm7 \font\fiveeufm=eufm5
\newcommand{\eqn}{\begin{eqnarray}}
\newcommand{\een}{\end{eqnarray}}
\newtheorem {Theorem}  {Theorem}
\numberwithin{Theorem}{section}
\newtheorem{Lemma}[Theorem]{Lemma}
\newtheorem{Proposition}[Theorem]{Proposition}
\newtheorem{Definition}[Theorem]{Definition}
\newtheorem{Remark}[Theorem]{Remark}
\renewcommand{\a}{\alpha}
\renewcommand{\div}{\mbox{div}}
\begin{document}

\title[Decay of the 4D critical nonlinear heat equation]{DECAY RATES FOR THE 4D ENERGY-CRITICAL NONLINEAR HEAT EQUATION
}

\author[L. Kosloff]{Leonardo Kosloff}
\address[L. Kosloff]{Departamento de Matem\'atica, Instituto de Matem\'atica, Estat\'{\i}stica e
Computa\c{c}\~ao Cient\'{\i}fica, Universidade Estadual de Campinas, Rua Sergio Buarque de Holanda, 651, 13083-859, Campinas - SP, Brazil}
\email{kosloff@ime.unicamp.br}

\author[C. J. Niche]{C\'esar J. Niche}
\address[C.J. Niche]{Departamento de Matem\'atica Aplicada, Instituto de Matem\'atica. Universidade Federal do Rio de Janeiro, CEP 21941-909, Rio de Janeiro - RJ, Brazil}
\email{cniche@im.ufrj.br}

\author[G. Planas]{Gabriela Planas}
\address[G. Planas]{Departamento de Matem\'atica, Instituto de Matem\'atica, Estat\'{\i}stica e
Computa\c{c}\~ao Cient\'{\i}fica, Universidade Estadual de Campinas, Rua Sergio Buarque de Holanda, 651, 13083-859, Campinas - SP, Brazil}
\email{gplanas@unicamp.br}

\thanks{L. Kosloff has been supported by FAPESP-Brazil  grant 2016/15985-0.  C.J. Niche acknowledges support from PROEX - CAPES.  G. Planas was partially supported by CNPq-Brazil, grant 310274/2021-4, and FAPESP-Brazil grant 2019/02512-5}

\keywords{Nonlinear heat equation.  Decay rates.  Fourier Splitting. Decay character. }

\subjclass[2020]{35B40; 35K55.}

\date{\today}

\begin{abstract}
In this paper we address the decay of solutions to the four-dimen\-sional energy-critical nonlinear heat equation in the critical space $\dot{H}^1$. Recently,  it was proven that the  $\dot{H}^1$ norm of solutions goes to zero when time goes to infinity,  but no decay rates were established.  By means of the Fourier Splitting Method and using properties arising from the scale invariance,  we obtain an algebraic upper bound for the decay rate of solutions.
\end{abstract}

\maketitle

\section{Introduction}

The energy-critical nonlinear heat equation in $\RR^n$,
\begin{align}
\label{eq:critical-nonlinear-heat}
\partial _t u   & =  \Delta u +|u|^{\frac{4}{n-2}} u,   \nonumber \\  u(x,0) & = u_0 (x) \in \dot{H}^1 (\RR ^n),
\end{align}
where $n \geq 3$, has recently been extensively studied due to its rich structure, which naturally leads to important questions.  Using the natural scaling
\begin{equation}
\label{eq:scaling}
u _{\lambda} (x,t) = \lambda ^{\frac{n-2}{2}} u (\lambda x, \lambda^2 t), \quad \lambda > 0,
\end{equation}
from a solution $u$ we obtain a new one $u_{\lambda}$  such that in the critical space $\dot{H}^1$ we have
\begin{displaymath}
\Vert u_{\lambda} (t) \Vert _{\dot{H}^1} = \Vert u (t) \Vert _{\dot{H}^1}, \qquad t > 0.
\end{displaymath}
This scaling leaves the energy
\begin{equation}
\label{eq:energy-general-case}
E( u(t)) = \frac{1}{2} \int _{\RR^n}  |\nabla u (t) |^2  \, dx - \frac{n-2}{2n} \int _{\RR^n}  |u (t)|^{\frac{2n}{n-2}} \, dx,
\end{equation}
invariant, i.e. $E (u_{\lambda} (t)) = E (u(t))$, for all $t, \lambda > 0$.  Moreover,  each of the terms in \eqref{eq:energy-general-case} is also invariant by this scaling.  Equation \eqref{eq:critical-nonlinear-heat} has a stationary solution  with remarkable properties,  the Aubin-Talenti bubble \cite{MR431287}, \cite{MR463908}
\begin{displaymath}
W(x) = \frac{1}{\left( 1 + \frac{1}{n(n-2)}  |x|^2 \right) ^{\frac{n-2}{2}}},
\end{displaymath}
which is,  up to scaling by \eqref{eq:scaling} and translations,  the unique  extremum of the Sobolev embedding $\dot{H} ^1 (\RR ^n) \subset L^{\frac{2n}{n-2}} (\RR^n)$,  i.e. $W$ realizes the equality in

\begin{displaymath}
\frac{\Vert u \Vert _{L ^{\frac{2n}{n-2}}}}{ \Vert u \Vert _{\dot{H}^1}} \leq C, \quad {\text{for} \, \, \, } C = \sqrt{\frac{1}{\pi n (n-2)}}  \left(\frac{\Gamma(n)}{\Gamma \left( \frac{n}{2} \right)} \right)^{\frac{1}{n}}\,.
\end{displaymath}

\begin{Remark} Equation \eqref{eq:critical-nonlinear-heat} is a particular case of the semilinear heat equation

\begin{equation}
\label{eq:fujita}
\partial _t u   =  \Delta u +|u|^{p-1} u, \qquad p > 1
\end{equation}
which has been extensively studied since Fujita's pioneering work \cite{MR214914}.  For a detailed account of results on this equation,  see Quittner and Souplet \cite{MR3967048}.
\end{Remark}

As usual for equations of the form \eqref{eq:fujita},   it is very important to establish whether solutions are global in time or their norm grows to infinity in a finite time., i.e.  they have finite time blow-up.   This problem has been studied for \eqref{eq:critical-nonlinear-heat}  by  del Pino,  Musso and Wei \cite{MR3952701},  del Pino,  Musso, Wei and Zhou \cite{MR4097503},  Filippas, Herrero and Vel\'azquez \cite{MR1843848},  Galaktionov and King \cite{MR1968320},  Harada \cite{MR4072807}, \cite{MR4151635},  Schweyer \cite{MR2990063}.  The existence of solutions which exhibit infinite time blow-up,  this is,  global solutions such that  their norm goes to infinity when time goes to infinity,  was conjectured by Fila and King \cite{MR3004680} and recently established by Cort\'azar, del Pino and Musso \cite{MR4046015},  del Pino, Musso and Wei \cite{MR4047646},  \cite{MR4307216}.   Collot,  Merle and Raphael \cite{MR3623259} proved  that for $n \geq 7$ it is possible to classify the behaviour of solutions with initial data close to the ground state $W$ and that this rules out certain kinds of blow-up. There are also many important results for solutions to \eqref{eq:critical-nonlinear-heat} in bounded domains,  for details we refer to del Pino,  Musso and Wei \cite{MR4157676},  del Pino,  Musso,  Wei and Zheng \cite{MR4288612},  Ikeda and Taniguchi \cite{https://doi.org/10.48550/arxiv.1902.01016},  Mizoguchi and Soupplet \cite{MR3994444},  Suzuki \cite{MR2492236},  Wang and Wei \cite{https://doi.org/10.48550/arxiv.2101.07186}.

In this article we are interested in the $4D$ version of \eqref{eq:critical-nonlinear-heat}, this is

\begin{align}
\label{eq:critical-nonlinear-heat-4d}
\partial _t u   & =  \Delta u +|u|^2 u,   \\  u(x,0) & = u_0 (x) \in \dot{H} ^1 (\RR^4). \nonumber
\end{align}
In this case,  the Aubin-Talenti bubble becomes

\begin{equation}
\label{eq:bubble-solution}
W(x) = \frac{1}{1 + \frac{1}{8} |x|^2}\,,
\end{equation}
and the Sobolev constant is $C = \Vert \nabla W \Vert _{L^2} ^{- \frac{1}{2}}$, see Gustafson and Roxanas \cite{MR3765769}. We note that $W$ is in $\dot{H} ^1 (\RR^4)$ but not in $L^2(\RR^4)$, and the energy is
 \[
E( u(t)) = \frac{1}{2} \int _{\RR^4}  |\nabla u (t) |^2  \, dx - \frac{1}{4} \int _{\RR^4}  |u (t)|^{4} \, dx.
\]

Our main goal is to study the decay of the $\dot{H} ^1$ norm of solutions to \eqref{eq:critical-nonlinear-heat-4d}. Recently Gustafson and Roxanas \cite{MR3765769} proved global existence of mild solutions to \eqref{eq:critical-nonlinear-heat-4d} provided the initial datum $u_0$ is small compared to the ground state \eqref{eq:bubble-solution} and established its behaviour at infinity.

\begin{Theorem}[Theorem 1.1, Gustafson and Roxanas \cite{MR3765769}]
\label{theorem-gustafson-roxanas}
Let $u_0 \in \dot{H} ^1 (\RR^4)$ such that
\begin{equation}
\label{eqn:conditions-gustafson-roxanas}
E(u_0) \leq E(W), \quad \Vert \nabla u_0 \Vert _{L^2} \leq \Vert \nabla W \Vert _{L^2}.
\end{equation}
Then the mild solution to \eqref{eq:critical-nonlinear-heat} is global and
\begin{equation}
\label{eq:decay-gustafson-roxanas}
\lim _{t \to \infty} \Vert u(t) \Vert _{\dot{H} ^1} = 0.
\end{equation}

\end{Theorem}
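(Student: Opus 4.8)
The plan is to treat \eqref{eq:critical-nonlinear-heat-4d} as the $L^2$-gradient flow of the energy $E$ and to run the concentration--compactness/rigidity scheme of Kenig--Merle in this dissipative parabolic setting. I would organize the proof as: (I) local theory together with the energy dissipation identity; (II) a variational energy trapping that propagates the hypotheses; (III) global existence; and (IV) a rigidity argument extracting decay from the dissipation. First, testing \eqref{eq:critical-nonlinear-heat-4d} against $\partial_t u$ yields the dissipation identity
\[
\frac{d}{dt} E(u(t)) = -\|\partial_t u(t)\|_{L^2}^2 \le 0 ,
\]
so $E(u(t))$ is non-increasing, hence $E(u(t)) \le E(u_0) \le E(W)$ throughout the lifespan. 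Local well-posedness of mild solutions in $\dot H^1(\RR^4)$ I would establish by a fixed-point argument on the Duhamel formula, using smoothing/Strichartz estimates for $e^{t\Delta}$ in a scaling-critical spacetime norm, together with the attendant blow-up criterion controlling the maximal time by that norm.

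Next I would prove the \emph{energy trapping}. Writing $y=\|\nabla u\|_{L^2}^2$ and using the sharp Sobolev inequality $\|u\|_{L^4}^4 \le C^4 y^2$ with $C=\|\nabla W\|_{L^2}^{-1/2}$ realized by $W$ (so $C^4\|\nabla W\|_{L^2}^2=1$ and $E(W)=\tfrac14\|\nabla W\|_{L^2}^2$), the parabola $y\mapsto \tfrac12 y-\tfrac14 C^4 y^2$ attains its maximum value $E(W)$ exactly at $y=\|\nabla W\|_{L^2}^2$. A continuity argument then shows that the constraints $\|\nabla u_0\|_{L^2}\le\|\nabla W\|_{L^2}$ and $E(u_0)\le E(W)$ persist: the solution cannot cross the threshold $\|\nabla W\|_{L^2}$ without violating $E(u(t))\le E(W)$. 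On this subcritical branch one has $\|u\|_{L^4}^4\le\|\nabla u\|_{L^2}^2$ and thus the coercive bound
\[
\tfrac14\|\nabla u(t)\|_{L^2}^2 \le E(u(t)) \le \tfrac12\|\nabla u(t)\|_{L^2}^2 ,
\]
so $E(u(t))\ge 0$ and the $\dot H^1$ norm is comparable to the energy. Global existence I would then obtain by combining this uniform $\dot H^1$ bound with the perturbation theory of step (I): since boundedness of the energy does not by itself preclude type-II blow-up for a critical equation, I would run the Kenig--Merle induction on energy to rule out a minimal (critical) element by concentration--compactness, obtaining a finite global critical spacetime norm and hence a global solution.

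For the decay, the coercivity of step (II) gives $E(u(t))\ge0$, so monotonicity yields a limit $E_\infty\ge0$ with $\int_0^\infty\|\partial_t u\|_{L^2}^2\,dt = E(u_0)-E_\infty<\infty$; I then pick $t_n\to\infty$ with $\|\partial_t u(t_n)\|_{L^2}\to0$. Setting $v_n:=u(t_n)$, the sequence is bounded in $\dot H^1(\RR^4)$ with $\|\nabla v_n\|_{L^2}\le\|\nabla W\|_{L^2}$ and satisfies
\[
\Delta v_n + |v_n|^2 v_n = \partial_t u(t_n) \to 0 \quad \text{in } L^2(\RR^4) ,
\]
so $(v_n)$ is asymptotically a finite-energy steady state. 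The rigidity step is to conclude that the only admissible limit is $0$: every nontrivial solution $\phi\in\dot H^1(\RR^4)$ of $\Delta\phi+|\phi|^2\phi=0$ is, up to scaling and translation, the ground state, and therefore satisfies $E(\phi)\ge E(W)$; in the trapped regime this forces the limiting profile to vanish, whence $E_\infty=0$ and, by the coercivity above, $\|\nabla u(t)\|_{L^2}^2\le 4E(u(t))\to0$, which is \eqref{eq:decay-gustafson-roxanas}.

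The hard part is this last rigidity step, precisely because $\dot H^1(\RR^4)\hookrightarrow L^4(\RR^4)$ is \emph{not} compact: the sequence $v_n$ may concentrate at a point, escape to spatial infinity, or split into several bubbles, so one cannot simply pass to a strong limit. I would control this with a profile decomposition (G\'erard--Lions concentration--compactness) applied to $(v_n)$, use the scale- and translation-invariance of $E$ together with the threshold identity $E(W)=\tfrac14\|\nabla W\|_{L^2}^2$ to show that already a single nontrivial profile would exhaust the available energy budget $E(W)$, and invoke the strict energy deficit below threshold to forbid any nonzero profile. Finally, the genuinely borderline orbit must be isolated: if $E(u_0)=E(W)$ and $\|\nabla u_0\|_{L^2}=\|\nabla W\|_{L^2}$ simultaneously, then equality in Sobolev forces $u_0$ to be a rescaled bubble $W_\lambda$ and the solution is stationary, so the decay conclusion is to be read for data off this single steady orbit.
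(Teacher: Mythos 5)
This theorem is quoted verbatim from Gustafson and Roxanas \cite{MR3765769}; the paper under review gives no proof of it, only the remark that the decay statement \eqref{eq:decay-gustafson-roxanas} is obtained along the lines of Gallagher, Iftimie and Planchon \cite{MR1891005}. Your outline --- local theory plus the dissipation identity, variational energy trapping below the ground state, a concentration--compactness argument to rule out a minimal non-decaying element, and rigidity via the stationary equation --- is essentially the strategy actually carried out in \cite{MR3765769}, so there is nothing in the present paper to contrast it with; as a plan it is sound, though the heavy steps (profile decomposition for the heat flow, perturbation theory, exclusion of the critical element) are named rather than executed. One inaccuracy: the claim that every nontrivial $\dot{H}^1(\RR^4)$ solution of $\Delta\phi+|\phi|^2\phi=0$ is the ground state up to symmetries is false (sign-changing finite-energy solutions exist); what you actually need, and what does hold by combining the Pohozaev identity $\Vert\nabla\phi\Vert_{L^2}^2=\Vert\phi\Vert_{L^4}^4$ with the sharp Sobolev inequality, is only the lower bound $E(\phi)\ge E(W)$, which is the fact your argument uses. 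Your closing observation that the borderline data with $\Vert\nabla u_0\Vert_{L^2}=\Vert\nabla W\Vert_{L^2}$ and $E(u_0)=E(W)$ produce the stationary bubble, which does not decay and must be excepted from \eqref{eq:decay-gustafson-roxanas}, is correct and reflects the more precise case distinction made in the original statement in \cite{MR3765769}.
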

The  proof of \eqref{eq:decay-gustafson-roxanas} follows along the lines of that in Gallagher, Iftimie and Planchon \cite{MR1891005},  who proved an analogous result for the critical $\dot{H}^{\frac{1}{2}}$ norm of solutions to the $3D$ Navier-Stokes equations.  However,  the method's nature does not allow to infer  decay rates in \eqref{eq:decay-gustafson-roxanas}.

We now briefly describe the elements we need to state  our main result,  which says that the decay in \eqref{eq:decay-gustafson-roxanas} is in fact algebraic,  with rate depending on the decay character of the initial datum.   The decay character of $v_0 \in L^2 (\RR^n)$,  denoted by $r^{\ast} = r^{\ast} (v_0)$,  is a number $- \frac{n}{2} < r^{\ast} < \infty$ which roughly speaking shows that $|\widehat{v_0} (\xi)| \approx |\xi| ^{r^{\ast}}$, when $|\xi| \approx 0$.  If  $v$ is the solution to the heat equation with initial datum $v_0 \in L^2(\RR^n)$, then
\begin{displaymath}
C_1 (1 + t)^{- \left( \frac{n}{2} + r^{\ast} \right)} \leq \Vert v(t) \Vert _{L^2} ^2 \leq C_2 (1 + t)^{- \left( \frac{n}{2} + r^{\ast} \right)}, \qquad C_1, C_2 > 0.
\end{displaymath}
The decay character was introduced by Bjorland and M.E. Schonbek \cite{MR2493562},  and later refined and extended for a large family of dissipative linear operators by Niche and M.E. Schonbek \cite{MR3355116}  and Brandolese \cite{MR3493117}, see Section \ref{settings} for further details.

We can now state our main result.  Let $\Lambda = (- \Delta) ^{\frac{1}{2}}$.

\begin{Theorem}
\label{main-theorem}
Let $u_0 \in \dot{H} ^1 (\RR^4)$ with $q^{\ast} = r^{\ast} \left( \Lambda u_0\right) > -2$, and $\Vert u_0 \Vert _{\dot{H}^1}$ small enough.  Then for the global solution $u$ to \eqref{eq:critical-nonlinear-heat} we have

\begin{equation}
\label{eq:main-estimate}
\Vert u (t) \Vert ^2 _{\dot{H} ^1} \leq C (1+t) ^{- \min \left\{ \left(2 + q^{\ast}\right), 1 \right\}}.
\end{equation}
\end{Theorem}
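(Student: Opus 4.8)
The plan is to run the Fourier Splitting Method at the level of $\Lambda u = (-\Delta)^{1/2} u$, exploiting the fact that $\|u(t)\|_{\dot H^1}^2 = \|\Lambda u(t)\|_{L^2}^2$, so that decay of the $\dot H^1$ norm is exactly $L^2$ decay of the auxiliary quantity $v := \Lambda u$. Applying $\Lambda$ to \eqref{eq:critical-nonlinear-heat-4d} gives an equation for $v$ whose linear part is the heat semigroup and whose forcing is $\Lambda(|u|^2 u)$. I would write the energy identity
\begin{equation}
\label{eq:energy-id}
\frac{d}{dt} \Vert v(t) \Vert_{L^2}^2 = -2 \Vert \Lambda v(t) \Vert_{L^2}^2 + 2 \int_{\RR^4} \Lambda v \cdot \Lambda(|u|^2 u) \, dx,
\end{equation}
and then split the frequency domain into a ball $B(t) = \{ |\xi| \leq g(t) \}$ with $g(t)^2 \sim \beta/(1+t)$ and its complement. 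On the high-frequency part one bounds $\|\Lambda v\|_{L^2}^2$ from below by $g(t)^2 (\|v\|_{L^2}^2 - \int_{B(t)} |\hat v|^2)$, which after multiplying by an integrating factor $(1+t)^{\alpha}$ produces a differential inequality of the form
\begin{equation}
\label{eq:diff-ineq}
\frac{d}{dt}\left( (1+t)^{\alpha} \Vert v \Vert_{L^2}^2 \right) \leq C (1+t)^{\alpha-1} \int_{B(t)} |\hat v(\xi)|^2 \, d\xi + (\text{forcing terms}),
\end{equation}
the standard engine of the method.

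The crux is controlling the low-frequency integral $\int_{B(t)} |\hat v(\xi)|^2 \, d\xi$ and the nonlinear forcing. For the low modes I would compare $v$ with the solution of the linear heat equation carrying the same data $\Lambda u_0$: by the decay character hypothesis $q^{\ast} = r^{\ast}(\Lambda u_0) > -2$, the linear evolution satisfies $\int_{B(t)} |\widehat{e^{t\Delta}\Lambda u_0}|^2 \, d\xi \lesssim (1+t)^{-(2+q^{\ast})}$, since $n=4$ gives the exponent $\tfrac n2 + q^{\ast} = 2 + q^{\ast}$. The contribution of the Duhamel (nonlinear) term to the low-frequency mass must be shown to decay at least as fast, which is where the smallness of $\|u_0\|_{\dot H^1}$ enters: one uses it together with the Gagliardo-Nirenberg/Sobolev control of $\||u|^2 u\|$ in the right space to close a bootstrap. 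Feeding these bounds into \eqref{eq:diff-ineq} and integrating yields $\|v(t)\|_{L^2}^2 \lesssim (1+t)^{-(2+q^{\ast})}$ as long as $2+q^{\ast} \le 1$; when $2+q^{\ast} > 1$ the integrating-factor argument saturates at the rate $(1+t)^{-1}$, which accounts for the $\min\{2+q^{\ast}, 1\}$ in \eqref{eq:main-estimate} and reflects the non-integrable tail typical of the nonlinear problem.

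The main obstacle I anticipate is the nonlinear term $\Lambda(|u|^2 u)$ in \eqref{eq:energy-id}: because $u \in \dot H^1(\RR^4)$ only (and $W \notin L^2$), there is no a priori $L^2$ bound on $u$, so every estimate must be expressed purely in terms of $\dot H^1$-type norms and the scale-invariant Sobolev embedding $\dot H^1 \hookrightarrow L^4$. Estimating $\int \Lambda v \cdot \Lambda(|u|^2 u)$ will require a fractional Leibniz (Kato-Ponce) inequality to distribute $\Lambda$ across the cubic term, followed by Hölder and Sobolev, producing a factor like $\|u\|_{\dot H^1}^2 \|\Lambda v\|_{L^2}^2$; smallness of $\|u_0\|_{\dot H^1}$ (propagated by Theorem \ref{theorem-gustafson-roxanas}, which keeps $\|\nabla u(t)\|_{L^2} \le \|\nabla W\|_{L^2}$) is exactly what lets this term be absorbed into the dissipation $-2\|\Lambda v\|_{L^2}^2$ rather than destroying the energy inequality. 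Handling the low-frequency part of this same nonlinear term, where one cannot simply absorb into dissipation, is the delicate point and likely needs an independent decay estimate for the forcing obtained by iterating the linear decay through Duhamel's formula.
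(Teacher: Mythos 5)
Your overall architecture coincides with the paper's: you derive a Lyapunov/energy inequality for $\Vert \Lambda u\Vert_{L^2}^2$ by distributing $\Lambda$ over the cubic term with a Kato--Ponce inequality and absorbing the result into the dissipation using smallness, and you then run Fourier splitting, treating the low-frequency ball by Duhamel with the decay character governing the linear part. (One small correction along the way: the absorption step requires $C\sup_{t>0}\Vert u(t)\Vert_{\dot H^1}^2\le 1$, i.e.\ genuine uniform-in-time smallness of the solution, which the paper extracts from the small-data existence estimate \eqref{eqn:small-solution}; the monotonicity bound $\Vert\nabla u(t)\Vert_{L^2}\le\Vert\nabla W\Vert_{L^2}$ from Theorem \ref{theorem-gustafson-roxanas} that you invoke is not small enough for this purpose.)

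There is, however, a genuine gap exactly at the point you yourself flag as ``delicate'': the low-frequency contribution of the nonlinear Duhamel term. The paper controls it by the pointwise Fourier bound $\bigl||\xi|\,\mathcal{F}(|u|^2u)\bigr|\le\Vert\Lambda(|u|^2u)\Vert_{L^1}\le C\Vert u\Vert_{\dot H^1}^3$, so that this contribution is at most $C\,r(t)^4\bigl(\int_0^t\Vert u(s)\Vert_{\dot H^1}^3\,ds\bigr)^2$ where $r(t)$ is the radius of the splitting ball. With only the a priori bound $\Vert u(t)\Vert_{\dot H^1}\le C$ this quantity is of order $r(t)^4t^2\sim 1$, so a single pass with the weight $(1+t)^{\alpha}$ (your integrating factor) yields no decay at all: the forcing integrates to $C(1+t)^{\alpha}$ and, after dividing by the weight, you recover only boundedness. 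The paper closes this by a two-stage bootstrap: first the logarithmic weight $g(t)=[\ln(e+t)]^3$, for which the nonlinear forcing is $\sim t^2(e+t)^{-3}$, whose integral grows only like $\ln(e+t)$ and is therefore beaten by the weight, giving the preliminary decay $\Vert u(t)\Vert_{\dot H^1}^2\le C[\ln(e+t)]^{-2}$; then a second pass with $g(t)=(1+t)^{\alpha}$, where the logarithmic gain produces an integral inequality with the integrable kernel $(1+s)^{-1}[\ln(e+s)]^{-4}$, closed by the two Gronwall-type estimates (Propositions \ref{gronwall} and \ref{gronwall-2}, according to whether $q^{\ast}>-1$ or $q^{\ast}\le-1$), which is also where the threshold $\min\{2+q^{\ast},1\}$ emerges. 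Your proposal names the obstacle but supplies neither the $L^1\!\to\!L^{\infty}$ Fourier estimate for $\Lambda(|u|^2u)$ nor the logarithmic first step, and without some such device the bootstrap you describe does not close.
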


In order to prove Theorem \ref{main-theorem} we follow the idea used by the authors in \cite{https://doi.org/10.48550/arxiv.2206.09445}  to show that the critical $\dot{H}^{\frac{1}{2}}$ norm of solutions to the $3D$ Navier-Stokes equations has algebraic decay rates depending on the decay character of the initial datum.  More precisely,  in Proposition \ref{lyapunov-function}  we prove that the $\dot{H} ^1$ norm is a Lyapunov function for solutions to  \eqref{eq:critical-nonlinear-heat}, where we use in an essential way the fact that,  as solutions are obtained through a fixed point argument,  $\Vert u(t) \Vert^2 _{\dot{H}^1}$ can be made uniformly small in time.   Through Proposition \ref{lyapunov-function} we rigurously obtain the  inequality

\[
\frac{d}{dt} \Vert u(t) \Vert^2 _{\dot{H}^1} \leq - 2 \left( 1- C  \Vert u(t) \Vert^2 _{\dot{H}^1} \right) \Vert \nabla u(t) \Vert^2 _{\dot{H}^1}\,,
\]
which leads,  after once again using that $\Vert u(t) \Vert^2 _{\dot{H}^1}$ is small,  to

\begin{displaymath}
\frac{d}{dt} \Vert u(t) \Vert^2 _{\dot{H}^1} \leq - \widetilde{C} \Vert \nabla u(t) \Vert^2 _{\dot{H}^1}.
\end{displaymath}
This  is the starting point for using the Fourier Splitting method,  developed by M.E.  Schonbek \cite{MR571048},  \cite{MR775190}, \cite {MR837929} to prove decay of solutions to parabolic conservation laws and the Navier-Stokes equations.  A careful adaptation of the method to our context leads to the estimate.  Our proof is independent of the decay result in Theorem \ref{theorem-gustafson-roxanas},  as we do not use at any time the fact  that  $\Vert u(t) \Vert^2 _{\dot{H}^1}$ goes to zero.  However,   both Theorems \ref{theorem-gustafson-roxanas} and \ref{main-theorem}  crucially depend on the small size of $u_0 \in \dot{H} ^1 (\RR^4)$ beyond the existence of solutions result.  More precisely,  in our case a small $u_0$ leads to a uniformly in time small  solution $u(t)$, which is key to our proof,  as we already pointed out.  In Gustafson and Roxanas \cite{MR3765769},  if

\begin{displaymath}
\Vert \nabla u_0 \Vert _{L^2} < \frac{1}{\sqrt{2}} \Vert \nabla W \Vert _{L^2},
\end{displaymath}
then the inequality $E(u_0) \leq E(W)$ follows after a simple calculation noting that $ E(W) =   \frac{1}{4} \Vert W \Vert  _{\dot{H}^1} ^2$,  where we used that the Sobolev constant is $C = \Vert \nabla W \Vert _{L^2} ^{- \frac{1}{2}}$.  Hence,  a small $u_0$ leads to both conditions in \eqref{eqn:conditions-gustafson-roxanas} being fulfilled simultaneously.

The decay character can be used in  \eqref{eq:main-estimate} to establish the role of the linear and nonlinear part of the solution in the decay: when $q^{\ast} < -1$,  then $\min \left\{ \left(2 + q^{\ast}\right), 1 \right\} = 2 + q^{\ast}$,  hence the decay is driven by the slower linear part,  see \eqref{eq:decay-linear-part}.  When $q^{\ast} > -1$,  the slower decay is that of the nonlinear part.

\begin{Remark}
In their Remark 1.1,  Gustafson and Roxanas \cite{MR3765769} note that they expect \eqref{eq:decay-gustafson-roxanas} to hold for any $n \geq 3$.  We believe that a version of Theorem \ref{main-theorem} should hold for $n \geq 3$.  We will address this generalization elsewhere,  as we would have to carefully set the space in which the existence results hold,  see Definition \ref{definition-solution}.  This is key for proving Proposition \ref{lyapunov-function} and \eqref{eq:preparation-fourier-splitting}, through \eqref{eq:condition-existence} and Theorem   \ref{gustafson-roxanas}.
\end{Remark}

This paper is organized as follows.  In Section \ref{settings}  we recall the definition and properties of the Decay Character and in Section \ref{proofs} we prove our main result.

\section{Decay Character}

\label{settings}

In order to express the decay rate of solutions to dissipative equations,  Bjorland and M.E. Schonbek \cite{MR2493562} introduced the decay character, which was later refined and extended by Niche and M.E. Schonbek \cite{MR3355116}, and Brandolese \cite{MR3493117}.  Roughly speaking,  to an initial datum  $v_0 \in L^2(\RR^n)$ the decay character associates a number which measures its ``algebraic order'' near the origin, by comparing $ |\widehat{v_0} (\xi)|$ to $f(\xi) = |\xi|^{r}$ at $\xi = 0$.

\begin{Definition} \label{decay-indicator}
Let  $v_0 \in L^2(\RR^n)$. For $r \in \left(- \frac{n}{2}, \infty \right)$, we define the {\em decay indicator}  $P_r (v_0)$ corresponding to $v_0$ as
\begin{displaymath}
P_r(v_0) = \lim _{\rho \to 0} \rho ^{-2r-n} \int _{B(\rho)} \bigl |\widehat{v_0} (\xi) \bigr|^2 \, d \xi,
\end{displaymath}
provided this limit exists. In the expression above,  $B(\rho)$ denotes the ball at the origin with radius $\rho$.
\end{Definition}

\begin{Definition} \label{df-decay-character} The {\em decay character of $ v_0$}, denoted by $r^{\ast} = r^{\ast}( v_0)$ is the unique  $r \in \left( -\frac{n}{2}, \infty \right)$ such that $0 < P_r (v_0) < \infty$, provided that this number exists. We set $r^{\ast} = - \frac{n}{2}$, when $P_r (v_0)  = \infty$ for all $r \in \left( - \frac{n}{2}, \infty \right)$  or $r^{\ast} = \infty$, if $P_r (v_0)  = 0$ for all $r \in \left( -\frac{n}{2}, \infty \right)$.
\end{Definition}

It is possible to explicitly compute the decay character for many important examples. When $v_0 \in L^p (\RR^n) \cap L^2 (\RR ^n)$ for $1 < p < 2$ and $v_0 \notin L^{\bar{p}} (\RR ^n)$ for $\bar{p} < p$, we have that $r^{\ast} (v_0) = - n \left( 1 - \frac{1}{p} \right)$, see Example 2.6 in Ferreira, Niche and Planas \cite{MR3565380}. When $v_0 \in L^1 (\RR^n) \cap L^2 (\RR^n)$ and $|\widehat{v_0} (\xi)|$ is bounded away from zero, i.e.
\begin{displaymath}
C_1 \leq |\widehat{v_0} (\xi)| \leq C_2, \qquad |\xi| \leq \beta,
\end{displaymath}
for some $\beta > 0$ and $0 < C_1 \leq C_2$, then $r^{\ast} (v_0) = 0$, see Section 4.1 in M.E. Schonbek \cite{MR837929}. For  $v_0 \in L^{1, \gamma} (\RR^n)$, for $0 \leq \gamma \leq 1$, i.e.
\begin{displaymath}
\Vert f \Vert _{L^{1, \gamma} (\mathbb{R} ^n)} = \int _{\mathbb{R} ^n} \left( 1 + |x| \right) ^{\gamma} |f(x)| \, dx < \infty,
\end{displaymath}
we have from Lemma 3.1 in Ikehata \cite{MR2055280} that

\begin{displaymath}
|\widehat{v_0} (\xi) | \leq C |\xi| ^{\gamma} + \left| \int _{\RR ^n} v_0(x) \, dx \right|, \qquad \xi \in \RR ^n.
\end{displaymath}
If $v_0$ has zero mean,  then $r^{\ast} (v_0) = \gamma$, if not, then $r^{\ast} (v_0) = 0$.

Bjorland and M.E. Schonbek \cite{MR2493562} used the decay character to obtain upper and lower decay rates for solutions to the heat equation.  We describe now a more general setting, due to Niche and M.E. Schonbek \cite{MR3355116},  in which the decay character provides decay estimates.  Let $X$ be a Hilbert space and consider a linear diagonalizable pseudodifferential operator $\mathcal{L}: X^n \to \left( L^2 (\RR^n) \right) ^n$,  with symbol matrix $ M(\xi)$ such that
\begin{equation}
\label{eqn:symbol}
M(\xi) = P^{-1} (\xi) D(\xi) P(\xi), \qquad \xi-a.e.
\end{equation}
where $P(\xi) \in O(n)$ and $D(\xi) = - c_i |\xi|^{2\a} \delta _{ij}$, for $c_i > c>0$ and $0 < \a \leq 1$. Then,  from the linear system $ v_t = \mathcal{L} v$ we obtain
\begin{displaymath}
\frac{1}{2} \frac{d}{dt} \Vert v(t) \Vert _{L^2} ^2  \leq  - C  \int _{\RR^n} |\xi|^{2 \a} |\widehat{v}|^2 \, d \xi,
\end{displaymath}
which is the starting point for using the Fourier Splitting Method,  see \eqref{eq:preparation-fourier-splitting} - \eqref{eqn:key-inequality} in Section \ref{proofs} for an example on how this method is used.  We note that the vectorial fractional Laplacian $(- \Delta) ^{\alpha}$, with $0 < \alpha \leq 1$; the vectorial Lam\'e  operator
$\mathcal{L} u = \Delta u + \nabla \, \div \, u, $
see  Example 2.9 in Niche and M.E. Schonbek \cite{MR3355116}, and the linear part of the magneto-micropolar system (see Niche and Perusato \cite{MR4379088}), amongst others, are diagonalizable as in \eqref{eqn:symbol}.

We now state the Theorem that describes decay in terms of the decay character for linear operators as in \eqref{eqn:symbol}.

\begin{Theorem}{(Theorem 2.10, Niche and M.E. Schonbek \cite{MR3355116})}
\label{characterization-decay-l2}
Let $v_0 \in L^2 (\RR^n)$ have decay character $r^{\ast} (v_0) = r^{\ast}$. Let $v (t)$ be the solution to the linear system

\begin{displaymath}
v_t = \mathcal{L} v
\end{displaymath}
with initial datum $v_0$, where the operator $\mathcal{L}$ is such that \eqref{eqn:symbol}  holds. Then if $- \frac{n}{2 } < r^{\ast}< \infty$, there exist constants $C_1, C_2> 0$ such that
\begin{displaymath}
C_1 (1 + t)^{- \frac{1}{\a} \left( \frac{n}{2} + r^{\ast} \right)} \leq \Vert v(t) \Vert _{L^2} ^2 \leq C_2 (1 + t)^{- \frac{1}{\a} \left( \frac{n}{2} + r^{\ast} \right)}.
\end{displaymath}
\end{Theorem}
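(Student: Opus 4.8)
The plan is to pass to the Fourier side, where the solution is $\widehat{v}(\xi,t)=e^{tM(\xi)}\widehat{v_0}(\xi)$, and to exploit the diagonalization \eqref{eqn:symbol} together with the orthogonality $P(\xi)\in O(n)$. Since a real orthogonal matrix is an isometry of $\CC^n$, we have $|P(\xi)w|=|w|$ for every $w\in\CC^n$, and because $M=P^{-1}DP$ gives $e^{tM}=P^{-1}e^{tD}P$, it follows that
\begin{displaymath}
|\widehat{v}(\xi,t)|^2 = |e^{tD(\xi)}P(\xi)\widehat{v_0}(\xi)|^2 = \sum_{i=1}^n e^{-2c_i t|\xi|^{2\a}}\,|(P(\xi)\widehat{v_0}(\xi))_i|^2.
\end{displaymath}
Writing $\bar c=\max_i c_i$ (finite, as $i$ ranges over $1,\dots,n$) and using $c\le c_i\le\bar c$ to sandwich the integrand, then integrating and invoking Plancherel, I obtain
\begin{displaymath}
\int_{\RR^4} e^{-2\bar c\, t|\xi|^{2\a}}|\widehat{v_0}(\xi)|^2\,d\xi \;\le\; \norm{v(t)}_{L^2}^2 \;\le\; \int_{\RR^4} e^{-2c\, t|\xi|^{2\a}}|\widehat{v_0}(\xi)|^2\,d\xi.
\end{displaymath}
Thus both estimates reduce to the scalar quantity $I_a(t)=\int_{\RR^n} e^{-at|\xi|^{2\a}}|\widehat{v_0}(\xi)|^2\,d\xi$, $a>0$.

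For the upper bound I set $F(\rho)=\int_{B(\rho)}|\widehat{v_0}(\xi)|^2\,d\xi$ and integrate by parts in the radial variable to write
\begin{displaymath}
I_a(t) = 2a\a t\int_0^\infty \rho^{2\a-1}e^{-at\rho^{2\a}}F(\rho)\,d\rho,
\end{displaymath}
the boundary terms vanishing because $F(0)=0$ and $F(\rho)\to\norm{v_0}_{L^2}^2$ while the exponential kills the upper endpoint. Since $r^\ast=r^\ast(v_0)\in(-\tfrac n2,\infty)$, the defining limit $\rho^{-2r^\ast-n}F(\rho)\to P_{r^\ast}(v_0)\in(0,\infty)$ yields $F(\rho)\le C\rho^{2r^\ast+n}$ for $\rho\le\rho_0$, while $F(\rho)\le\norm{v_0}_{L^2}^2$ always. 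Splitting the integral at $\rho_0$, the inner piece (after the substitution $s=at\rho^{2\a}$) produces a convergent Gamma integral times $t^{-\frac1\a(\frac n2+r^\ast)}$, convergence being guaranteed by $r^\ast>-\tfrac n2$; the outer piece is bounded by $\norm{v_0}_{L^2}^2\,e^{-at\rho_0^{2\a}}$, which is exponentially small and hence negligible against the algebraic rate. This gives the upper bound for large $t$, and a constant adjustment upgrades $t$ to $1+t$ for all $t\ge0$.

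For the lower bound I discard all but a shrinking ball: restricting $I_{2\bar c}(t)$ to $B(g(t))$ with $g(t)=(2\bar c\,t)^{-1/(2\a)}$, so that $2\bar c\,t\,g(t)^{2\a}=1$ and $e^{-2\bar c t|\xi|^{2\a}}\ge e^{-1}$ on the ball, gives
\begin{displaymath}
\norm{v(t)}_{L^2}^2 \ge e^{-1}F\!\left(g(t)\right) \ge e^{-1}\,c\,g(t)^{2r^\ast+n} = c'\,(2\bar c\,t)^{-\frac1\a(\frac n2+r^\ast)}
\end{displaymath}
for $t$ large enough that $g(t)\le\rho_0$, where I use the matching lower estimate $F(\rho)\ge c\,\rho^{2r^\ast+n}$ coming again from $P_{r^\ast}(v_0)>0$.

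The main obstacle is the bookkeeping around the decay character: the hypothesis only guarantees that $\rho^{-2r^\ast-n}F(\rho)$ \emph{converges} to a positive finite limit, so the two-sided bound $c\,\rho^{2r^\ast+n}\le F(\rho)\le C\,\rho^{2r^\ast+n}$ is available only on a small ball $\rho\le\rho_0$; one must then check that the contributions from $\rho\ge\rho_0$ (exponentially small in the upper bound, simply discarded in the lower bound) do not spoil the sharp exponent. The one genuine computation is tracking the power of $t$ through the substitution $s=at\rho^{2\a}$, which must land exactly on $-\frac1\a(\frac n2+r^\ast)$ and relies on $r^\ast>-\tfrac n2$ to keep the Gamma integral convergent.
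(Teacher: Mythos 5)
This theorem is imported verbatim from Niche and M.E. Schonbek \cite{MR3355116} and the paper offers no proof of it, so there is no in-paper argument to compare against; judged on its own, your proof is correct and is essentially the standard one for this result: diagonalize on the Fourier side using \eqref{eqn:symbol} and the orthogonality of $P(\xi)$ to sandwich $\Vert v(t)\Vert_{L^2}^2$ between the two scalar integrals $I_{2\bar c}(t)$ and $I_{2c}(t)$, then convert the definition of the decay character into two-sided bounds $c_0\,\rho^{2r^{\ast}+n}\le F(\rho)\le C_0\,\rho^{2r^{\ast}+n}$ on a small ball $\rho\le\rho_0$ and track the exponent through $s=at\rho^{2\a}$, with the tail $\rho\ge\rho_0$ contributing only an exponentially small error. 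Two minor points: the integrals after Plancherel should be over $\RR^n$, not $\RR^4$; and for the lower bound you only cover $t$ large enough that $g(t)\le\rho_0$, so the range $0\le t\le T_1$ still needs the easy observation that $\Vert v(t)\Vert_{L^2}^2\ge \int_{\RR^n} e^{-2\bar c T_1|\xi|^{2\a}}|\widehat{v_0}(\xi)|^2\,d\xi>0$ there (positivity holding because $-\tfrac n2<r^{\ast}<\infty$ forces $v_0\ne 0$), after which constants absorb everything. Your use of $\bar c=\max_i c_i$ tacitly reads the $c_i$ in \eqref{eqn:symbol} as constants rather than functions of $\xi$, which is consistent with how the paper states the hypothesis.
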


 When defining the decay character in Definitions  \ref{decay-indicator} and \ref{df-decay-character},  it is assumed that the limits in those expressions exist.  Brandolese \cite{MR3493117} showed that this is not necessarily the case for all $v_0 \in L^2 (\RR ^n)$,  by constructing highly oscillating near the origin initial data, for which the limit in Definition \ref{decay-indicator} does not exist for some $r$.  However,  he gave a slightly different definition of decay character, more general than that in  Definitions \ref{decay-indicator} and \ref{df-decay-character},  which produces the same result when these hold.  Moreover,  he showed that the decay character $r ^{\ast}$ exists for $v_0 \in L^2 (\RR ^n)$ if and only if $v_0$ belongs to a specific subset of a homogeneous Besov space, i.e. $v_0  \in  \dot{\mathcal{A}} ^{- \left(\frac{n}{2} + r^{\ast} \right)} _{2, \infty} \subset \dot{B} ^{- \left(\frac{n}{2} + r^{\ast} \right)} _{2, \infty}$ and also that, for diagonalizable linear operators $\mathcal{L}$ as in \eqref{eqn:symbol},  the estimates
\begin{displaymath}
C_1 (1 + t)^{- \frac{1}{\a} \left( \frac{n}{2} + r^{\ast} \right)} \leq \Vert v(t) \Vert _{L^2} ^2 \leq C_2 (1 + t)^{- \frac{1}{\a} \left( \frac{n}{2} + r^{\ast} \right)},
\end{displaymath}
hold if and only if the decay character $r^{\ast} = r^{\ast} (v_0)$ exists. This provides a complete and sharp characterization of algebraic decay rates for such systems and furnishes a key tool for studying decay for nonlinear systems.

\section{Proof of Theorem \ref{main-theorem}}
\label{proofs}

\subsection{Global mild solutions for \eqref{eq:critical-nonlinear-heat} }
We recall now  definitions and results from Gustafson and Roxanas \cite{MR3765769} concerning existence of global solutions to  \eqref{eq:critical-nonlinear-heat}.

\begin{Definition} \label{definition-solution} We say $u$ is a global mild solution to \eqref{eq:critical-nonlinear-heat} if
\[
u(t) = e^{t \Delta} u_0 + \int_0^t e^{(t-s) \Delta} |u|^2 u \,ds
\]
and $u \in C^0 ([0,\infty); \dot{H} ^1 (\RR^4)) \cap L^6 ([0,\infty); L^6 (\RR^4))$, $\nabla u \in L^3 ([0,\infty); L^3(\RR^4))$, $D^2 u \in L^2([0,\infty); L^2(\RR^4))$, $\partial_t u \in L^2([0,\infty); L^2(\RR^4))$.
\end{Definition}
Now consider
\begin{displaymath}
 \| u \|_{L_t^6 L_x^6}\, = \left( \int_{\RR _{+}} \int_{\RR^4} |u(t,x)|^6 \right)^{1/6}.
\end{displaymath}
Then,  from well-known heat equation estimates (see Giga \cite{MR833416}) and the Sobolev embedding $\dot{H} ^1 (\RR^4) \subset L^4 (\RR^4)$, we have for $u_0 \in \dot{H} ^1(\RR^4)$
\begin{equation}
\label{eq:condition-existence}
\| e^{t \Delta} u_0 \|_{L_t^6 L_x^6} \leq C \| u_0 \|_{L^4}\,\leq C \| u_0 \|_{\dot{H}^1}\,,\; t>0\,.
\end{equation}

We recall the small data global existence result.

\begin{Theorem}[Theorem 2.1 (5),  Gustafson and Roxanas \cite{MR3765769}]
\label{gustafson-roxanas}
There is an $\epsilon_0 > 0$ such that if

\begin{displaymath}
\| e^{t \Delta} u_0 \|_{L_t^6 L_x^6} \leq \epsilon_0,
\end{displaymath}
then \eqref{eq:critical-nonlinear-heat} has a global solution and

\begin{equation}
\label{eqn:small-solution}
 \| u \|_{L_t^6 L_x^6} + \Vert \nabla u \Vert _{L^{\infty} _t L^2 _x \cap L^3 _t L^3 _ x} + \Vert D^2 u \Vert _{L^2 _t L^2 _x } \leq C \epsilon_0, \qquad C > 0.
\end{equation}
Note that,  by \eqref{eq:condition-existence},  this holds if $ \| u_0 \|_{\dot{H}^1}$ is small enough.
\end{Theorem}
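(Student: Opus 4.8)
The plan is to build the solution by a contraction mapping carried out in the single scale-invariant norm $\Vert\cdot\Vert_{L_t^6 L_x^6}$, whose nonlinear estimate is self-contained, and then to recover the remaining three norms in \eqref{eqn:small-solution} a posteriori. Set
\[
\Phi(u)(t) = e^{t\Delta} u_0 + \int_0^t e^{(t-s)\Delta} |u|^2 u \, ds.
\]
The linear term is controlled directly by \eqref{eq:condition-existence}, giving $\Vert e^{t\Delta} u_0 \Vert_{L_t^6 L_x^6} \leq \epsilon_0$. The heart of the matter is the Duhamel estimate. Because the nonlinearity is cubic, H\"older gives $\Vert\,|u|^2 u\,\Vert_{L_x^2} = \Vert u \Vert_{L_x^6}^3$, hence $\Vert\,|u|^2 u\,\Vert_{L_t^2 L_x^2} = \Vert u \Vert_{L_t^6 L_x^6}^3$. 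Combining the parabolic smoothing bound $\Vert e^{(t-s)\Delta} F \Vert_{L_x^6} \leq C (t-s)^{-2/3} \Vert F \Vert_{L_x^2}$, whose exponent is $-\tfrac23 = -\tfrac{n}{2}\left(\tfrac12 - \tfrac16\right)$ for $n=4$, with the Hardy--Littlewood--Sobolev inequality in time --- the kernel $(t-s)^{-2/3}$ maps $L_t^2 \to L_t^6$ precisely because $\tfrac16 = \tfrac12 - \tfrac13$ --- I obtain
\[
\Big\Vert \int_0^t e^{(t-s)\Delta} |u|^2 u \, ds \Big\Vert_{L_t^6 L_x^6} \leq C \Vert u \Vert_{L_t^6 L_x^6}^3 .
\]

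Together these give $\Vert \Phi(u) \Vert_{L_t^6 L_x^6} \leq \epsilon_0 + C \Vert u \Vert_{L_t^6 L_x^6}^3$, while the pointwise bound $\big|\,|u|^2 u - |v|^2 v\,\big| \leq C(|u|^2 + |v|^2)|u-v|$ together with the same smoothing/HLS scheme yields the Lipschitz estimate $\Vert \Phi(u)-\Phi(v)\Vert_{L_t^6 L_x^6} \leq C\left(\Vert u\Vert_{L_t^6 L_x^6}^2 + \Vert v\Vert_{L_t^6 L_x^6}^2\right)\Vert u-v\Vert_{L_t^6 L_x^6}$. On the ball $\{\Vert u\Vert_{L_t^6 L_x^6}\leq 2\epsilon_0\}$ both the self-mapping and the contraction properties hold once $\epsilon_0$ is small enough that $C(2\epsilon_0)^2 < \tfrac12$, so the Banach fixed point theorem produces a unique global mild solution with $\Vert u\Vert_{L_t^6 L_x^6}\leq 2\epsilon_0$.

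It remains to upgrade this to the full bound \eqref{eqn:small-solution}. For the second derivative I would use maximal $L^2$ regularity of the heat operator, $\Vert D^2 \int_0^t e^{(t-s)\Delta} F\,ds\Vert_{L_t^2 L_x^2}\leq C\Vert F\Vert_{L_t^2 L_x^2}$, together with the Plancherel identity $\Vert D^2 e^{t\Delta}u_0\Vert_{L_t^2 L_x^2} = \tfrac{1}{\sqrt2}\Vert u_0\Vert_{\dot{H}^1}$, to get $\Vert D^2 u\Vert_{L_t^2 L_x^2}\leq C\left(\Vert u_0\Vert_{\dot{H}^1} + \Vert u\Vert_{L_t^6 L_x^6}^3\right)$. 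Feeding this into the equation $\partial_t u = \Delta u + |u|^2 u$ controls $\partial_t u$ in $L_t^2 L_x^2$; the bound on $\Vert\nabla u\Vert_{L_t^\infty L_x^2}$ then follows from the energy identity $\tfrac{d}{dt}\Vert\nabla u\Vert_{L_x^2}^2 = -2\langle \Delta u,\partial_t u\rangle$ after integrating in time; and $\Vert\nabla u\Vert_{L_t^3 L_x^3}$ comes from the Gagliardo--Nirenberg interpolation $\Vert\nabla u\Vert_{L_x^3}\leq C\Vert\nabla u\Vert_{L_x^2}^{1/3}\Vert D^2 u\Vert_{L_x^2}^{2/3}$ followed by H\"older in time, which bounds it by $\Vert\nabla u\Vert_{L_t^\infty L_x^2}^{1/3}\Vert D^2 u\Vert_{L_t^2 L_x^2}^{2/3}$. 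All of these are controlled by $\epsilon_0$ through \eqref{eq:condition-existence}.

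The step I expect to be the main obstacle is the nonlinear Duhamel estimate: since every norm in \eqref{eqn:small-solution} is scale critical there is no integrability to spare, and the argument hinges on the time exponent in the Hardy--Littlewood--Sobolev step matching exactly the $L_x^2 \to L_x^6$ smoothing rate of the semigroup. A secondary subtlety is that the hypothesis is phrased as smallness of $\Vert e^{t\Delta}u_0\Vert_{L_t^6 L_x^6}$, whereas the a posteriori derivative estimates are naturally measured by $\Vert u_0\Vert_{\dot{H}^1}$; the two are reconciled through \eqref{eq:condition-existence}, which is precisely why the concluding remark of the statement reduces everything to smallness of $\Vert u_0\Vert_{\dot{H}^1}$.
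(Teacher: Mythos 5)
This theorem is quoted from Gustafson and Roxanas and is not proved in the paper, so there is no internal proof to compare against; I can only assess your reconstruction on its own terms. Your scheme --- contraction in the single scale-invariant norm $\Vert u\Vert_{L_t^6L_x^6}$ via the $L_x^2\to L_x^6$ smoothing bound with kernel $(t-s)^{-2/3}$ and one-dimensional Hardy--Littlewood--Sobolev in time (admissible since $\tfrac23\in(0,1)$ and $1+\tfrac16=\tfrac23+\tfrac12$), followed by maximal $L^2$ regularity, the equation itself, the energy identity, and Gagliardo--Nirenberg to recover the remaining norms --- is sound and is essentially the standard critical local theory that the cited reference carries out (Gustafson and Roxanas contract in a family of space-time norms simultaneously rather than in $L^6_{t,x}$ alone and then bootstrapping, but the nonlinear estimates are the same). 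One caveat you half-identify but should state more sharply: as literally written, the conclusion $\Vert\nabla u\Vert_{L_t^\infty L_x^2}\leq C\epsilon_0$ cannot follow from the hypothesis $\Vert e^{t\Delta}u_0\Vert_{L_t^6L_x^6}\leq\epsilon_0$ alone, since at $t=0$ this norm equals $\Vert u_0\Vert_{\dot{H}^1}$ and \eqref{eq:condition-existence} controls the $L^6$ norm by the $\dot{H}^1$ norm, not conversely. Your a posteriori bounds correctly produce $\Vert u_0\Vert_{\dot{H}^1}+\Vert u\Vert_{L^6_{t,x}}^3$ on the right-hand side; the statement is therefore to be read, as its closing sentence and the paper's use of it indicate, under the assumption that $\Vert u_0\Vert_{\dot{H}^1}$ itself is small, in which case all four norms are indeed controlled by that quantity.
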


\subsection{Proof of Theorem \ref{main-theorem}} In order to prove that the $\dot{H}^1$ norm is a Lyapunov function,  first key step in the proof of Theorem \ref{main-theorem}, we need the following Proposition.

\begin{Lemma} [Lemma 5.10, Bahouri, Chemin and Danchin \cite{MR2768550}] \label{Lemma-heat}
Let $w \in C\left([0,T]; \mathcal{S} '(\RR ^n) \right)$ be the solution to

\begin{align*}
\partial _t w - \Delta w  & =   f,  \nonumber \\ w(x,0) & = w_0 (x) ,
\end{align*}
where $f \in L^2 \left([0,T]; \dot{H} ^{s-1}(\RR^n) \right)$ and $w_0 \in \dot{H}^{s} (\RR^n)$.  Then we have the energy identity

\[
\Vert w(t) \Vert ^2 _{\dot{H} ^s}  + 2 \int _0 ^t \Vert \nabla w(\tau) \Vert ^2  _{\dot{H}^s} \, d\tau = \Vert w_0 \Vert ^2 _{\dot{H} ^s} + 2  \int _0 ^t \langle w (\tau), f(\tau) \rangle _{\dot{H}^s} \, d \tau.
\]
\end{Lemma}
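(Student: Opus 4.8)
The plan is to pass to the Fourier side, where the heat equation becomes a family of scalar linear ODEs indexed by the frequency $\xi$, prove the identity there by an elementary computation, and then integrate in $\xi$ and in time. Taking the Fourier transform of $\partial _t w - \Delta w = f$ gives, for a.e. fixed $\xi$,
\[
\partial _t \widehat{w}(\xi,t) + |\xi|^2 \widehat{w}(\xi,t) = \widehat{f}(\xi,t), \qquad \widehat{w}(\xi,0) = \widehat{w_0}(\xi),
\]
whose solution is the Duhamel formula $\widehat{w}(\xi,t) = e^{-t|\xi|^2}\widehat{w_0}(\xi) + \int_0^t e^{-(t-\tau)|\xi|^2}\widehat{f}(\xi,\tau)\,d\tau$; this representation is consistent with, and indeed forced by, the assumption $w \in C\left([0,T];\mathcal{S}'(\RR^n)\right)$. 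Multiplying the ODE by $\overline{\widehat{w}}$, taking real parts, and using $\partial _t |\widehat{w}|^2 = 2\,\mathrm{Re}\bigl(\overline{\widehat{w}}\,\partial _t \widehat{w}\bigr)$ yields the pointwise-in-$\xi$ identity
\[
\tfrac{1}{2}\partial _t |\widehat{w}(\xi,t)|^2 + |\xi|^2 |\widehat{w}(\xi,t)|^2 = \mathrm{Re}\bigl(\overline{\widehat{w}(\xi,t)}\,\widehat{f}(\xi,t)\bigr).
\]

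Next I would multiply by $|\xi|^{2s}$, integrate over $\xi \in \RR^n$, and integrate in time over $[0,t]$. Recognizing $\int_{\RR^n}|\xi|^{2s+2}|\widehat{w}|^2\,d\xi = \Vert \nabla w\Vert^2_{\dot{H}^s}$ and $\int_{\RR^n}|\xi|^{2s}\,\mathrm{Re}\bigl(\overline{\widehat{w}}\,\widehat{f}\bigr)\,d\xi = \langle w, f\rangle_{\dot{H}^s}$ produces exactly the claimed identity, provided all the interchanges of integration and differentiation are legitimate. To see that the inner product term is well defined and integrable in time, I would invoke the $\dot{H}^{s+1}$--$\dot{H}^{s-1}$ duality, $|\langle w, f\rangle_{\dot{H}^s}| \le \Vert \nabla w\Vert_{\dot{H}^s}\Vert f\Vert_{\dot{H}^{s-1}}$, so that, once the regularity below is in hand, the right-hand side is integrable on $[0,t]$ by Cauchy--Schwarz together with the hypothesis $f \in L^2\left([0,T];\dot{H}^{s-1}\right)$.

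The rigorous content, and the main obstacle, is justifying these manipulations for a solution that is a priori only in $C\left([0,T];\mathcal{S}'\right)$. Here I would first upgrade the regularity: using the Duhamel representation together with $w_0 \in \dot{H}^s$, $f \in L^2\left([0,T];\dot{H}^{s-1}\right)$ and the parabolic smoothing estimates for the heat semigroup, one obtains $w \in C\left([0,T];\dot{H}^s\right)$ and $\nabla w \in L^2\left([0,T];\dot{H}^s\right)$, the gain of one derivative in the time-integrated norm being exactly what makes $\Vert \nabla w\Vert^2_{\dot{H}^s}$ integrable on $[0,t]$. With this regularity secured, I would make the computation rigorous by a frequency truncation: localize $\widehat{w}$ and $\widehat{f}$ to annuli $\{2^{-j}\le|\xi|\le 2^j\}$, where the integrand is smooth in $(\xi,t)$ and Fubini and differentiation under the integral sign are immediate, establish the identity there, and then let $j\to\infty$. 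Dominated convergence, using the $L^2_t\dot{H}^s$ bound on $\nabla w$ and the $\dot{H}^s$ and $L^2_t\dot{H}^{s-1}$ bounds on $w$ and $f$, controls both the high-frequency end (via parabolic smoothing) and the low-frequency end (via the homogeneous Sobolev hypotheses) and passes the identity to the limit. I expect the low-frequency limit to be the only genuinely delicate point, since homogeneous norms weight small frequencies heavily; but the assumptions $w_0\in\dot{H}^s$ and $f\in L^2\dot{H}^{s-1}$ are precisely what guarantee that the relevant integrals converge.
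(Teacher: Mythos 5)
The paper offers no proof of this lemma---it is quoted directly from Bahouri, Chemin and Danchin \cite{MR2768550}---and your argument (pass to the Fourier side, derive the pointwise-in-$\xi$ energy identity from the scalar ODE, weight by $|\xi|^{2s}$, integrate, and justify the limits by a regularity upgrade plus frequency truncation) is correct and is essentially the standard proof given in that reference. The only point deserving explicit care is your claim that the Duhamel representation is ``forced by'' $w \in C\left([0,T];\mathcal{S}'(\RR^n)\right)$, which amounts to uniqueness of the Cauchy problem for the heat equation in tempered distributions; this is true but should be stated as the lemma it is rather than asserted in passing.
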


We can now prove such result.

\begin{Proposition}
\label{lyapunov-function}
Let $u_0 \in \dot{H}^{1} (\RR ^4)$  be small enough. Then,
the norm in $\dot{H}^{1} (\RR ^4)$ is a Lyapunov function for equation \eqref{eq:critical-nonlinear-heat}.
\end{Proposition}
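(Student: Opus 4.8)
The plan is to run the standard energy argument for the $\dot{H}^1$ seminorm and then absorb the nonlinear contribution into the dissipation, exploiting the uniform-in-time smallness of the solution guaranteed by Theorem~\ref{gustafson-roxanas}. First I would apply the energy identity of Lemma~\ref{Lemma-heat} with $s = 1$, viewing \eqref{eq:critical-nonlinear-heat-4d} as the inhomogeneous heat equation $\partial_t u - \Delta u = f$ with forcing $f = |u|^2 u$. The hypotheses hold: $u_0 \in \dot{H}^1(\RR^4)$, and $f \in L^2([0,T]; L^2(\RR^4)) = L^2([0,T]; \dot{H}^0(\RR^4))$ since $\Vert |u|^2 u \Vert_{L^2_t L^2_x} = \Vert u \Vert_{L^6_t L^6_x}^3$ is finite by \eqref{eqn:small-solution}. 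This yields
\begin{displaymath}
\Vert u(t) \Vert^2_{\dot{H}^1} + 2\int_0^t \Vert \nabla u(\tau) \Vert^2_{\dot{H}^1}\,d\tau = \Vert u_0 \Vert^2_{\dot{H}^1} + 2\int_0^t \langle u(\tau), |u|^2 u(\tau) \rangle_{\dot{H}^1}\,d\tau .
\end{displaymath}
Both integrands lie in $L^1_t$ (for the dissipative term by the bound on $\Vert D^2 u \Vert_{L^2_t L^2_x}$ in \eqref{eqn:small-solution}, for the nonlinear term by the estimate below), so both sides are absolutely continuous and I may differentiate a.e.\ in $t$ to get
\begin{displaymath}
\frac{d}{dt} \Vert u(t) \Vert^2_{\dot{H}^1} = -2\Vert \nabla u(t) \Vert^2_{\dot{H}^1} + 2\langle u(t), |u|^2 u(t) \rangle_{\dot{H}^1} .
\end{displaymath}

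The heart of the argument is the nonlinear estimate $\left| \langle u, |u|^2 u \rangle_{\dot{H}^1} \right| \leq C \Vert u \Vert^2_{\dot{H}^1} \Vert \nabla u \Vert^2_{\dot{H}^1}$. To prove it I would integrate by parts, writing $\langle u, |u|^2 u \rangle_{\dot{H}^1} = -\int_{\RR^4} (\Delta u)\, |u|^2 u\,dx$, and apply Cauchy--Schwarz to bound the right-hand side by $\Vert \Delta u \Vert_{L^2}\,\Vert u \Vert_{L^6}^3$, noting that $\Vert \Delta u \Vert_{L^2} = \Vert \nabla u \Vert_{\dot{H}^1}$. For the $L^6$ factor I would use the Sobolev embedding $\dot{H}^{4/3}(\RR^4) \subset L^6(\RR^4)$ together with the interpolation inequality $\Vert u \Vert_{\dot{H}^{4/3}} \leq \Vert u \Vert_{\dot{H}^1}^{2/3}\,\Vert u \Vert_{\dot{H}^2}^{1/3}$, which gives $\Vert u \Vert_{L^6}^3 \leq C\,\Vert u \Vert_{\dot{H}^1}^2\,\Vert u \Vert_{\dot{H}^2}$. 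Multiplying the two factors produces exactly $C\,\Vert u \Vert_{\dot{H}^1}^2\,\Vert u \Vert_{\dot{H}^2}^2 = C\,\Vert u \Vert_{\dot{H}^1}^2\,\Vert \nabla u \Vert_{\dot{H}^1}^2$.

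Combining the two displays yields the differential inequality
\begin{displaymath}
\frac{d}{dt}\Vert u(t) \Vert^2_{\dot{H}^1} \leq -2\left(1 - C\Vert u(t)\Vert^2_{\dot{H}^1}\right)\Vert \nabla u(t) \Vert^2_{\dot{H}^1} .
\end{displaymath}
Finally I would invoke the uniform smallness from Theorem~\ref{gustafson-roxanas}: since in $\dot{H}^1(\RR^4)$ one has $\Vert u(t) \Vert_{\dot{H}^1} = \Vert \nabla u(t) \Vert_{L^2}$, the bound $\Vert \nabla u \Vert_{L^\infty_t L^2_x} \leq C\epsilon_0$ from \eqref{eqn:small-solution} gives $\Vert u(t) \Vert^2_{\dot{H}^1} \leq (C\epsilon_0)^2$ for all $t$. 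Choosing $u_0$, hence $\epsilon_0$, small enough that $C(C\epsilon_0)^2 < 1$ makes the factor $1 - C\Vert u(t)\Vert^2_{\dot{H}^1}$ strictly positive, so the right-hand side is nonpositive and $\frac{d}{dt}\Vert u(t)\Vert^2_{\dot{H}^1} \leq 0$. Thus $\Vert u(t) \Vert^2_{\dot{H}^1}$ is nonincreasing along solutions, i.e.\ it is a Lyapunov function for \eqref{eq:critical-nonlinear-heat}.

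I expect the main obstacle to be the nonlinear estimate, specifically calibrating the embedding and interpolation exponents so that the bound lands precisely on $\Vert u\Vert^2_{\dot{H}^1}\Vert\nabla u\Vert^2_{\dot{H}^1}$ rather than on some other combination of seminorms; it is exactly this form, together with the uniform smallness of $\Vert u\Vert_{\dot{H}^1}$, that lets the nonlinearity be dominated by the dissipation. A secondary technical point is justifying the differentiation of the energy identity, which rests on the $L^1_t$ integrability of both terms and therefore relies crucially on the a priori bounds built into Definition~\ref{definition-solution} and \eqref{eqn:small-solution}.
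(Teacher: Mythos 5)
Your proof is correct, and while the overall skeleton matches the paper's (apply Lemma~\ref{Lemma-heat} with $s=1$, bound the trilinear term by $C\Vert u\Vert_{\dot H^1}^2\Vert\nabla u\Vert_{\dot H^1}^2$, absorb it into the dissipation using the uniform smallness of $\Vert u(t)\Vert_{\dot H^1}$ coming from \eqref{eq:condition-existence} and Theorem~\ref{gustafson-roxanas}), you handle the key nonlinear estimate by a genuinely different route. The paper keeps both derivatives split as $\langle\Lambda u,\Lambda(|u|^2u)\rangle$, applies H\"older with exponents $4$ and $4/3$, and then invokes the fractional Leibniz (Kato--Ponce) rule of Grafakos--Oh (Theorem~\ref{leibniz-rule}) twice to control $\Vert\Lambda(|u|^2u)\Vert_{L^{4/3}}$ by $\Vert\Lambda u\Vert_{L^4}\Vert u\Vert_{L^4}^2$, finishing with $\dot H^1(\RR^4)\subset L^4(\RR^4)$. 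You instead move both derivatives onto one factor, writing the pairing as $-\int_{\RR^4}(\Delta u)\,|u|^2u\,dx$ (legitimate for a.e.\ $t$ since $D^2u\in L^2_tL^2_x$ and $u(t)\in L^6_x$ a.e.), and then use only Cauchy--Schwarz, the embedding $\dot H^{4/3}(\RR^4)\subset L^6(\RR^4)$, and the interpolation $\Vert u\Vert_{\dot H^{4/3}}\le\Vert u\Vert_{\dot H^1}^{2/3}\Vert u\Vert_{\dot H^2}^{1/3}$; the exponents check out and land exactly on $C\Vert u\Vert_{\dot H^1}^2\Vert u\Vert_{\dot H^2}^2=C\Vert u\Vert_{\dot H^1}^2\Vert\nabla u\Vert_{\dot H^1}^2$. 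Your version is more elementary in that it avoids the Kato--Ponce machinery entirely, at the price of requiring the full two derivatives on $u$ (available here from Definition~\ref{definition-solution}); the paper's Leibniz-rule argument distributes the derivative more symmetrically and adapts more readily to fractional regularity indices, which is the setting of the authors' companion work on the $\dot H^{1/2}$ norm for Navier--Stokes. A minor stylistic difference: the paper stays with the integral identity between $t_1$ and $t_2$ and reads off monotonicity directly, whereas you differentiate first; your justification via absolute continuity of both sides is adequate.
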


\begin{proof}
Let $f = |u|^2 u$,  hence from  Definition \ref{definition-solution} we have that $f \in  L^2([0,\infty); L^2(\RR^4))$.  Taking $s = 1$,  $w_0 = u_0 \in \dot{H}^{1}(\RR^4)$ and applying Lemma \ref{Lemma-heat} we obtain the energy identity

\[
\Vert u(t_2) \Vert ^2 _{\dot{H} ^1}  + 2 \int _{t_1} ^{t_2} \Vert \nabla u(\tau) \Vert ^2  _{\dot{H}^1} \, d\tau = \Vert u (t_1) \Vert ^2 _{\dot{H} ^1} + 2  \int _{t_1} ^{t_2} \langle  u(\tau) , |u(\tau)|^2 u(\tau) \rangle _{\dot{H}^1} \, d \tau,
\]
where $t_2 > t_1$.  Note that as $D^2 u \in  L^2([0,\infty); L^2(\RR^4))$, it holds $\nabla u \in  L^2([0,\infty); \dot{H}^1(\RR^4))$,  thus both sides in this equality are finite.

We recall now a generalized Leibniz rule which we will use repeatedly.

 \begin{Theorem}[Theorem 1, Grafakos and Oh \cite{MR3200091}]
\label{leibniz-rule}
Let $f,g \in \mathcal{S} (\RR^n)$,  $\frac{1}{2} < r < \infty$,  $s > \max \{0, \frac{n}{r} - n\}$ and $\frac{1}{r} = \frac{1}{p_1} +  \frac{1}{q_1} =  \frac{1}{p_2} +  \frac{1}{q_2}$,  with $1 < p_1,  p_2, q_1, q_2\leq \infty$.   Then

\begin{displaymath}
\Vert \Lambda^s (fg) \Vert _{L^r} \leq C \left[ \Vert \Lambda^s f \Vert _{L^{p_1}}  \Vert g \Vert _{L^{q_1}} +  \Vert f \Vert _{L^{p_2}}  \Vert \Lambda^s g \Vert _{L^{q_2}} \right].
\end{displaymath}
\end{Theorem}

We have that
\[
\langle \Lambda u\,,\, \Lambda(|u|^2 u) \rangle \leq \| \Lambda u \|_{L^4} \| \Lambda(|u|^2 u) \|_{L^{\frac{4}{3}}} \leq C \| \Lambda u \|_{\dot{H}^1} \| \Lambda(|u|^2 u) \|_{L^{\frac{4}{3}}}\,,
\]
and by Theorem \ref{leibniz-rule}
\begin{align*}
 \| \Lambda(|u|^2 u) \|_{L^{\frac{4}{3}}} & \leq C \left[  \| \Lambda(|u|^2) \|_{L^2}  \| u \|_{L^{4}}   + \| |u|^2 \|_{L^2}  \| \Lambda u \|_{L^{4}} \right] \nonumber \\
 & \leq C \left[  \| \Lambda u \|_{L^4}  \| u \|_{L^{4}}  \| u \|_{L^{4}}   + \| u \|_{L^4}^2  \| \Lambda u \|_{L^{4}} \right]  \\
& = C \| \Lambda u \|_{L^4} \| u \|_{L^4}^2 \nonumber \\
& \leq C \| \Lambda u \|_{\dot{H}^1}  \| u \|_{\dot{H}^1}^2 .  \notag
\end{align*}
As a result of this
\begin{equation}
\label{eqn:Leibniz-rule-term}
\langle u , |u|^2 u \rangle _{\dot{H}^1} = \langle \Lambda u\,,\, \Lambda(|u|^2 u) \rangle \leq  C \| \Lambda u \|_{\dot{H}^1}^2  \| u \|_{\dot{H}^1}^2 ,
\end{equation}
and therefore
\begin{displaymath}
\Vert u(t_2) \Vert ^2 _{\dot{H} ^1}  + 2 \left( 1- C  \, \sup_{t > 0} \Vert u(t) \Vert^2 _{\dot{H}^1} \right) \int _{t_1} ^{t_2} \Vert \nabla u(\tau) \Vert ^2  _{\dot{H}^1} \, d\tau \leq \Vert u(t_1) \Vert ^2 _{\dot{H} ^1} \,.
\end{displaymath}
Choosing $\Vert u_0 \Vert  _{\dot{H} ^1}$ small enough,  from \eqref{eq:condition-existence} and Theorem \ref{gustafson-roxanas}  we have that $\| u \|_{L_t^{\infty} \dot{H}_x^1}$ is small for all $t>0$,  hence $ 1 \geq C   \sup_{t > 0} \Vert u(t) \Vert^2 _{\dot{H}^1}$. Then for any pair $t_1 < t_2$ we have
\begin{displaymath}
\Vert u(t_2) \Vert ^2 _{\dot{H} ^1} \leq \Vert u(t_1) \Vert ^2 _{\dot{H} ^1} \,,
\end{displaymath}
which entails the result.
\end{proof}

\textbf{Proof of Theorem \ref{main-theorem}:} From Proposition \ref{lyapunov-function}, we know that the $\dot{H} ^1$ norm is a nonincreasing function, hence has a derivative a.e. Then

\begin{align}
\label{eq:preparation-fourier-splitting}
\notag \frac{d}{dt} \Vert u(t) \Vert^2 _{\dot{H} ^1} & = 2 \langle \Lambda  u(t), \partial_t \Lambda u(t) \rangle \\ \notag & = 2 \langle \Lambda  u (t) ,\Lambda  \left( \Delta u (t)  +  |u|^2 u (t) \right)   \rangle \\ \notag &
 = -2 \Vert \nabla u (t) \Vert^2 _{\dot{H} ^1} + 2 \left\langle \Lambda  u (t) , \Lambda \left(  |u|^2 u (t)  \right)  \right\rangle \notag \\ & \leq - 2   \left(1 -  C \Vert  u(t) \Vert^2 _{\dot{H} ^1} \right) \Vert \nabla u(t) \Vert^2 _{\dot{H} ^1} \notag \\ & \leq - \widetilde{C} \Vert \nabla u(t) \Vert^2 _{\dot{H} ^1},
\end{align}
where we have used \eqref{eqn:Leibniz-rule-term} and once again \eqref{eq:condition-existence} together with Theorem \ref{gustafson-roxanas} to have that $\| u \|_{L_t^{\infty} \dot{H}_x^1}$ is small for all $t>0$.

Starting from \eqref{eq:preparation-fourier-splitting},  we can now  use the Fourier Splitting Method,  developed by M.E. Schonbek  to study decay of energy for solutions to parabolic conservations laws \cite{MR571048} and to Navier-Stokes equations \cite{MR775190}, \cite{MR837929}. This method is based on the idea that for many dissipative equations,  for large enough times the remaining energy is concentrated at the low frequencies.  We start by considering a ball $B(t)$ around the origin in frequency space with  continuous, time-dependent radius $r(t)$ such that
\begin{displaymath}
B(t) = \left\{\xi \in \RR^4: |\xi| \leq r(t) = \left( \frac{g'(t)}{\widetilde{C} g(t)} \right) ^{\frac{1}{2}}  \right\},
\end{displaymath}
with $g$ an increasing continuous function such that $g(0) = 1$.  Then
\begin{align*}
\notag \frac{d}{dt} \Vert u(t) \Vert^2 _{\dot{H} ^1} & \leq - \widetilde{C} \Vert \nabla u(t) \Vert^2 _{\dot{H} ^1} =  - \widetilde{C} \int _{\RR ^4} |\xi|^2 \,  ||\xi| \widehat{u} (\xi,t)|^2 \, d \xi \notag \\ & = - \widetilde{C} \int _{B(t)} |\xi|^2 \,  ||\xi| \widehat{u} (\xi,t)|^2 d \xi   - \widetilde{C} \int _{B(t) ^{c}} |\xi|^2 \,  ||\xi| \widehat{u} (\xi,t)|^2 \, d \xi \notag \\ & \leq -  \frac{g'(t)}{g(t)}  \int _{B(t) ^{c}}  ||\xi|  \widehat{u} (\xi,t)|^2 \, d \xi \notag \\ & =      \frac{g'(t)}{g(t)}  \int _{B(t)}  ||\xi| \widehat{u} (\xi,t)|^2 \, d \xi -  \frac{g'(t)}{g(t)}  \int _{\RR ^4}  ||\xi|  \widehat{u} (\xi,t)|^2 \, d \xi.
\end{align*}
Now  multiply both sides by $g(t)$ and rewrite in order to obtain the key inequality
\begin{equation}
\label{eqn:key-inequality}
\frac{d}{dt}   \left( g(t)  \Vert u (t) \Vert _{\dot{H} ^1} ^2 \right) \leq g'(t)  \int _{B(t)} ||\xi|  \widehat{u} (\xi, t)| ^2 \, d \xi.
\end{equation}
We now need a pointwise estimate for $ ||\xi|  \widehat{u} (\xi, t)|$ in $B(t)$ in order to go forward.  We first have
\begin{align}
\int _{B(t)} ||\xi|  \widehat{u} (\xi, t)| ^2 \, d \xi & \leq C \int _{B(t)} \left| e^{- t |\xi| ^2} |\xi| \widehat{u_0} (\xi, t) \right|^2 \, d \xi \notag \\ & + C \int _{B(t)} \left( \int _0 ^t  e^{- (t-s) |\xi| ^2} |\xi| \mathcal{F} \left(|u|^2 u \right) (\xi, s) \, ds \right) ^2 \, d \xi. \notag
\end{align}
The first term corresponds to the linear part, i.e.  the heat equation,  so, by Theorem \ref{characterization-decay-l2} this can be estimated as
\begin{equation}
\label{eq:decay-linear-part}
\int _{B(t)} \left| e^{- t |\xi| ^2} |\xi| \widehat{u_0} (\xi, t) \right|^2 \, d \xi \leq C \Vert e^{t \Delta} \Lambda u_0 \Vert^2 _{L^2} \leq C (1 + t) ^{- \left( 2 + q^{\ast} \right)},
\end{equation}
where $q^{\ast} = r^{\ast} \left( \Lambda u_0 \right)$.

 Now,  for the nonlinear term inside the time integral we have, using Theorem \ref{leibniz-rule} for estimating $ \Vert  \Lambda \left( |u|^2 u \right)  \Vert _{L^1}$ and $\Vert  \Lambda \left( |u|^2 \right) \Vert _{L^{\frac{4}{3}}}$, that
\begin{align}
\left| |\xi| \mathcal{F} \left(|u|^2 u \right) \right| & = \left| \mathcal{F} \left( \Lambda \left( |u|^2 u \right) \right)  \right| \leq \Vert \mathcal{F} \left( \Lambda \left( |u|^2 u \right) \right) \Vert _{L^{\infty}} \leq \Vert  \Lambda \left( |u|^2 u \right)  \Vert _{L^1} \notag \\ & \leq C \Vert \Lambda \left( |u|^2 \right) \Vert _{L^{\frac{4}{3}}} \Vert u \Vert _{L^4}  +  C \Vert |u|^2 \Vert _{L^2}  \Vert \Lambda u \Vert _{L^2} \notag \\ & \leq C \Vert \Lambda u \Vert _{L^2} \Vert u \Vert _{L^4} ^2 + C \Vert u \Vert _{L^4} ^2  \Vert u \Vert _{\dot{H} ^1} \leq C \Vert u \Vert _{\dot{H}^1} ^3. \notag
\end{align}
As a result of this
\begin{align}
\int _{B(t)} \left( \int _0 ^t  e^{- (t-s) |\xi| ^2} |\xi| \mathcal{F} \left(|u|^2 u \right) (\xi, s) \, ds \right) ^2 \, d \xi & \leq C \, {\text{Vol}} \, B(t)\, \,  \left( \int_0 ^t \Vert u(s)  \Vert _{\dot{H}^1} ^3 \, ds \right) ^2 \notag \\ & = C r^4 (t)  \left( \int_0 ^t \Vert u(s)  \Vert _{\dot{H}^1} ^3 \, ds \right) ^2,  \notag
\end{align}
which leads to
\begin{align}
\label{eqn:estimate}
\frac{d}{dt}   \left( g(t)  \Vert u (t) \Vert _{\dot{H} ^1} ^2 \right) & \leq C g'(t)  (1 + t) ^{- \left( 2 + q^{\ast} \right)} \notag \\ & + C g'(t) r^4 (t)  \left( \int_0 ^t \Vert u(s)  \Vert _{\dot{H}^1} ^3 \, ds \right) ^2.
\end{align}
We now use a bootstrap argument to prove the estimate in Theorem \ref{main-theorem}.  More precisely,  we first obtain a weaker estimate which we then plug in in the right hand side of \eqref{eqn:estimate} to find a better one. To do so, we first take $g(t) = \left[\ln (e+t) \right]^3$, which leads to
\begin{displaymath}
r(t) = \left(\frac{g'(t)}{\widetilde{C}g(t)}\right)^{\frac{1}{2}} = \left(  \frac{3}{\widetilde{C}(e+t) \ln (e+t)} \right) ^{\frac{1}{2}}.
\end{displaymath}
From \eqref{eqn:small-solution} we know that $\Vert u(t) \Vert _{\dot{H} ^1} \leq C$ for all $ t >0$, hence from \eqref{eqn:estimate} we obtain
\[
\frac{d}{dt}   \left( \left[\ln (e+t) \right]^3  \Vert u (t) \Vert _{\dot{H} ^1} ^2 \right)  \leq C \frac{\left[\ln (e+t) \right]^2}{e+t}  (1 + t) ^{- \left( 2 + q^{\ast} \right)} + C \frac{1}{(e+t)^3}  t ^2.
\]
Now notice that
\begin{align*}
\int _0 ^t \frac{\left[\ln (e+s) \right]^2}{e+s}  (1 + s) ^{- \left( 2 + q^{\ast} \right)} \, ds & \leq C \int _1 ^{\ln (e+s)} u^2 \, e^{-(2+ q^{\ast})u} \, du  \\ & \leq C \frac{2 \, e^{-(2+ q^{\ast})}}{ \left( 2 + q^{\ast} \right)}   \left(\frac{1}{2} + \frac{1}{2 + q^{\ast}} + \frac{1}{\left( 2 + q^{\ast} \right) ^2}  \right),
\end{align*}

which leads to
\begin{equation}
\label{eqn:first-decay}
\Vert u(t) \Vert _{\dot{H} ^1} ^2  \leq C [\ln (e+t)]^{-2}.
\end{equation}
We now use this estimate to bootstrap. Take $g(t) = (1 + t) ^{\alpha}$, with $\alpha > \max \{ 2 + q^{\ast}, 1 \}$, so $ r(t) = \bigl(\frac{\alpha}{\widetilde{C}(1+t)}\bigr)^{\frac{1}{2}}$.  Note that $\alpha > 0$.  Then,  from \eqref{eqn:estimate} and \eqref{eqn:first-decay} we obtain,  after integrating,  that
\begin{displaymath}
 \Vert u (t) \Vert _{\dot{H} ^1} ^2  \leq C (1+t) ^{-\alpha} + C (1 + t) ^{- \left( 2 + q^{\ast} \right)} + C (1+t)^{-1}  \int _0 ^t \frac{ \Vert u (s) \Vert _{\dot{H} ^1} ^2}{[\ln (e+s)]^4} \, ds,
\end{displaymath}
which leads to
\begin{equation}
\label{eqn:previous-estimate}
 \Vert u (t) \Vert _{\dot{H} ^1} ^2  \leq  C (1 + t) ^{- \left( 2 + q^{\ast} \right)} + C(1+t)^{-1}  \int _0 ^t \frac{ \Vert u (s) \Vert _{\dot{H} ^1} ^2}{ [\ln (e+s)]^4} \, ds.
\end{equation}

We now recall the following Gronwall-type estimate.
\begin{Proposition}[Theorem 1, page 356, Mitrinovi\'{c}, Pe\v{c}ari\'{c} and Fink \cite{MR1190927}] \label{gronwall} Let $x,k:J \to \RR$ continuous and $a,b: J \to \RR$ Riemann integrable in $J = [\alpha, \beta]$. Suppose that $b, k \geq 0$ in $J$. Then, if
\begin{displaymath}
x(t) \leq a(t) + b(t) \int _{\alpha} ^t k(s) x(s) \, ds, \quad t \in J
\end{displaymath}
then
\begin{displaymath}
x(t) \leq a(t) + b(t) \int _{\alpha} ^t a(s) k(s) \exp \left( \int_s ^t b(r)k(r)  \, dr \right) \, ds, \quad t \in J.
\end{displaymath}
\end{Proposition}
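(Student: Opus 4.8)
The plan is to convert the integral inequality into a linear first-order differential inequality and resolve it with an integrating factor. Set $y(t) = \int_{\alpha}^{t} k(s)x(s)\,ds$, so that $y(\alpha)=0$ and, because $k$ and $x$ are continuous, $y$ is $C^1$ with $y'(t)=k(t)x(t)$. The hypothesis reads $x(t)\le a(t)+b(t)y(t)$, and since $k\ge 0$ I may multiply through by $k(t)$ to obtain the pointwise bound $y'(t)=k(t)x(t)\le a(t)k(t)+b(t)k(t)y(t)$ for every $t\in J$.

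Next I would introduce the integrating factor $\mu(t)=\exp\!\left(-\int_{\alpha}^{t} b(r)k(r)\,dr\right)$. Because $b$ is Riemann integrable and $k$ is continuous, $bk$ is Riemann integrable, so $t\mapsto\int_{\alpha}^{t} b(r)k(r)\,dr$ is absolutely continuous and $\mu$ is strictly positive with $\mu(\alpha)=1$. The product $\mu y$ is then absolutely continuous, and at every point where $bk$ is continuous — hence almost everywhere — one has $\frac{d}{dt}\bigl(\mu(t)y(t)\bigr)=\mu(t)\bigl(y'(t)-b(t)k(t)y(t)\bigr)\le \mu(t)a(t)k(t)$, where I have used the differential inequality from the first step together with $\mu>0$.

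Integrating this over $[\alpha,t]$ and using $\mu(\alpha)y(\alpha)=0$ gives $\mu(t)y(t)\le\int_{\alpha}^{t}\mu(s)a(s)k(s)\,ds$. Dividing by $\mu(t)>0$ and noting $\mu(s)/\mu(t)=\exp\!\left(\int_{s}^{t} b(r)k(r)\,dr\right)$, I obtain $y(t)\le\int_{\alpha}^{t} a(s)k(s)\exp\!\left(\int_{s}^{t} b(r)k(r)\,dr\right)ds$. Substituting this back into $x(t)\le a(t)+b(t)y(t)$ — where the sign condition $b\ge 0$ is exactly what keeps the substitution monotone — yields the claimed conclusion.

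The one point that requires genuine care, and the main technical obstacle, is that $a$ and $b$ are assumed only Riemann integrable rather than continuous, so the integrating-factor computation cannot be run naively in the class of $C^1$ functions. The remedy is to carry out the second step in the class of absolutely continuous functions: $\mu y$ is absolutely continuous, the fundamental theorem of calculus for the Lebesgue integral justifies integrating the almost-everywhere differential inequality, and since all the integrands appearing ($bk$ and $\mu a k$) are Riemann integrable their Lebesgue and Riemann integrals coincide, so the final estimate may be stated with ordinary Riemann integrals. With this regularity bookkeeping the argument is rigorous.
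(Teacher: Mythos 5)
Your argument is correct: the reduction to the differential inequality $y'\le ak+bky$ for $y(t)=\int_\alpha^t kx$, the integrating factor $\mu=\exp\bigl(-\int_\alpha^t bk\bigr)$, and the final substitution using $b\ge 0$ are exactly the standard proof of this Gronwall variant, and your handling of the mere Riemann integrability of $a,b$ (working with absolutely continuous functions and an almost-everywhere derivative, using that Riemann integrable functions are continuous a.e.) closes the only real regularity gap. Note that the paper itself offers no proof of this Proposition --- it is quoted verbatim from Mitrinovi\'{c}, Pe\v{c}ari\'{c} and Fink --- so there is nothing internal to compare against; your write-up would serve as a complete self-contained justification.
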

Suppose $q^{\ast} >-1$.  Now consider in \eqref{eqn:previous-estimate}
\begin{align}
x(t) & =  \Vert u (t) \Vert _{\dot{H} ^1} ^2,  \quad a(t) = C (1 + t) ^{- \left( 2 + q^{\ast} \right)} \notag \\ b(t) & = C(1+t)^{-1},  \quad k(t) = \frac{1}{[\ln (e+t)]^4}.  \notag
\end{align}
Note that
\begin{align}
\label{eqn:integral-kb}
\int_s ^t b(r) \, k(r) \, dr & = C\int _s ^t \frac{1}{(1 + r) [\ln (e+r)]^4} \, dr  \leq C,  \notag \\ \int_0 ^t a(s) \, k(s) \, ds & =C \int _0 ^t \frac{1}{(1 + s) ^{2 + q^{\ast}} [\ln (e+s)]^4} \, ds  \leq C,
\end{align}
hence using  Proposition \ref{gronwall} in \eqref{eqn:previous-estimate}  we obtain
\begin{equation*}
\Vert u (t) \Vert _{\dot{H} ^1} ^2  \leq  C (1 + t) ^{- \left( 2 + q^{\ast} \right)} + C (1+t)^{-1} \leq  C (1+t)^{-1}.
\end{equation*}

Now we consider $q^{\ast} \leq -1$. We rewrite (\ref{eqn:previous-estimate}) as

\begin{displaymath}
(1 + t)  \Vert u (t) \Vert _{\dot{H} ^1} ^2  \leq  C (1 + t) ^{1 - \left( 2 + q^{\ast} \right)} + C  \int _0 ^t \frac{ (1 + s) \Vert u (s) \Vert _{\dot{H} ^1} ^2}{(1 + s)  [\ln (e+s)]^4} \, ds.
\end{displaymath}
We recall  a different  Gronwall-type inequality.

\begin{Proposition}[Corollary 1.2,  page 4,  Bainov and Simeonov \cite{MR1171448}] \label{gronwall-2} Let $a,b,\psi,:J \to \RR$ continuous in $J = [\alpha, \beta]$ and $b \geq 0$.  If $a(t)$ is nondecreasing then
\begin{displaymath}
\psi(t) \leq a(t) +  \int _{\alpha} ^t b(s) \psi(s) \, ds, \quad t \in J
\end{displaymath}
implies
\begin{displaymath}
\psi(t) \leq a(t)  \exp \left( \int_{\alpha} ^t b(s)  \, ds \right) \quad t \in J.
\end{displaymath}

\end{Proposition}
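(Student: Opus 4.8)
The plan is to establish this classical comparison inequality by reducing it to a first-order linear differential inequality for the running integral and then integrating against an integrating factor; the nondecreasing hypothesis on $a$ enters only at one clearly identifiable step. Set $R(t) = \int_{\alpha}^{t} b(s)\psi(s)\,ds$, so that $R(\alpha)=0$, $R$ is absolutely continuous, and the hypothesis becomes $\psi(t) \leq a(t)+R(t)$ on $J$. Since $b \geq 0$, differentiating $R$ and inserting the hypothesis yields the linear differential inequality
\[
R'(t) = b(t)\psi(t) \leq b(t)\bigl(a(t)+R(t)\bigr), \qquad \text{equivalently} \qquad R'(t) - b(t)R(t) \leq b(t)a(t).
\]

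Next I would introduce the integrating factor $\mu(t) = \exp\bigl(-\int_{\alpha}^{t} b(r)\,dr\bigr)$, which is positive and continuous because $b$ is continuous. Multiplying the differential inequality by $\mu(t)$ collapses the left-hand side into a total derivative, $\frac{d}{dt}\bigl(\mu(t)R(t)\bigr) \leq \mu(t)b(t)a(t)$, and integrating from $\alpha$ to $t$ while using $R(\alpha)=0$ gives $\mu(t)R(t) \leq \int_{\alpha}^{t}\mu(s)b(s)a(s)\,ds$.

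The one place where the monotonicity of $a$ is needed is the following: for $s \leq t$ we have $a(s) \leq a(t)$, so $a(t)$ may be pulled out of the integral, leaving $\mu(t)R(t) \leq a(t)\int_{\alpha}^{t}\mu(s)b(s)\,ds$. The remaining integral telescopes, since $\mu(s)b(s) = -\frac{d}{ds}\mu(s)$, whence $\int_{\alpha}^{t}\mu(s)b(s)\,ds = \mu(\alpha)-\mu(t) = 1-\mu(t)$. Substituting back gives $R(t) \leq a(t)\bigl(\mu(t)^{-1}-1\bigr)$, and therefore
\[
\psi(t) \leq a(t)+R(t) \leq a(t)\,\mu(t)^{-1} = a(t)\exp\Bigl(\int_{\alpha}^{t} b(s)\,ds\Bigr),
\]
which is the asserted bound.

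I do not expect a genuine obstacle here, as the inequality is classical; the only point worth emphasizing is conceptual rather than technical. Monotonicity of $a$ is precisely what upgrades the general conclusion of Proposition \ref{gronwall} --- in which $a(s)$ must be kept inside the integral --- to the clean multiplicative form $a(t)\exp(\int_{\alpha}^{t} b)$ used here. If one prefers to avoid discussing the differentiability of $R$, an equivalent route is to iterate the hypothesis by repeated substitution and sum the resulting series $\sum_{k\geq 0} a(t)\bigl(\int_{\alpha}^{t} b\bigr)^{k}/k!$, again invoking $a(s)\leq a(t)$ at each stage; this reproduces the same estimate assuming only continuity of the data.
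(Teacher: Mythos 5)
Your argument is correct. Note that the paper does not prove this statement at all --- it is quoted as Corollary 1.2 of Bainov and Simeonov \cite{MR1171448} --- so there is no proof to compare against; what you have written is the standard integrating-factor derivation and it is complete. Two small remarks: $R(t)=\int_{\alpha}^{t}b(s)\psi(s)\,ds$ is in fact $C^1$ (its integrand is continuous), so the worry about differentiability is moot; and the step where $a(t)$ is pulled out of the integral uses only that the weight $\mu(s)b(s)$ is nonnegative together with $a(s)\leq a(t)$, so no sign assumption on $a$ is needed --- which matters here, since the statement does not assume $a\geq 0$.
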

Let
\begin{displaymath}
\psi (t) = (1 + t)  \Vert u (t) \Vert _{\dot{H} ^1} ^2,  \quad a(t) = C (1 + t) ^{1 - \left( 2 + q^{\ast} \right)}, 
\end{displaymath}
\[ \, b(t) = \frac{ C}{(1 + t)  [\ln (e+t)]^4}. \]
Notice that $q^{\ast} \leq -1$ implies that $a(t)$ is nondecreasing,  then Proposition \ref{gronwall-2}  and (\ref{eqn:integral-kb}) lead to
\begin{displaymath}
(1 + t)  \Vert u (t) \Vert _{\dot{H} ^1} ^2  \leq  C (1 + t) ^{1 - \left( 2 + q^{\ast} \right)}.
\end{displaymath}
We have thus proved Theorem \ref{main-theorem}.  $\Box$

\end{document}